\documentclass[10pt]{amsart}

\usepackage{amsmath,amsfonts,latexsym,amssymb,mathrsfs,setspace,enumerate,color,cite,graphicx,epsf,fullpage}

%\doublespacing

\newtheorem{thm}{Theorem}
\newtheorem{lem}[thm]{Lemma}

\newtheorem{cor}[thm]{Corollary}
\numberwithin{equation}{section}
\numberwithin{thm}{section}

\theoremstyle{definition}
\newtheorem{ex}{Example}

\newcommand{\rat}{\mathbb Q}
\newcommand{\real}{\mathbb R}

\newcommand{\alg}{\overline\rat}
\newcommand{\algt}{\alg^{\times}}

\newcommand{\intg}{\mathbb Z}
\newcommand{\nat}{\mathbb N}

\newcommand{\gal}{\mathrm{Gal}}
\newcommand{\norm}{\mathrm{Norm}}

\newcommand{\tors}{\mathrm{tors}}
\newcommand{\intL}{{\mathcal O}_L}
\newcommand{\intK}{{\mathcal O}_K}
\newcommand{\rank}{\mbox{rank}}

\title{Metric Mahler measures over number fields}
\author{Charles L. Samuels}
\address{Christopher Newport University, Department of Mathematics, 1 Avenue of the Arts, Newport News, VA 23606}
\email{charles.samuels@cnu.edu}
\subjclass[2010]{11G50, 11R04 (Primary); 11R11, 11R27, 11R29, 11R37, 13A15 (Secondary)}
\keywords{Mahler Measure, Metric Mahler Measure, Height Functions, Fundamental Unit, Ideal Class Group}

\begin{document}

\begin{abstract}
	For an algebraic number $\alpha$, the metric Mahler measure $m_1(\alpha)$ was first studied by Dubickas and Smyth in 2001 and was later generalized to the $t$-metric 
	Mahler measure $m_t(\alpha)$ by the author in 2010.  The definition of $m_t(\alpha)$ involves taking an infimum over a certain collection $N$-tuples of points in $\alg$, and 
	from previous work of Jankauskas and the author, the infimum in the definition of $m_t(\alpha)$ is attained by rational points when $\alpha\in \rat$.  As a consequence of 
	our main theorem in this article, we obtain an analog of this result when $\rat$ is replaced with any imaginary quadratic number field of class number equal to $1$.  
	Further, we study examples of other number fields to which our methods may be applied, and we establish various partial results in those cases.
\end{abstract}

\maketitle

\section{Introduction}

Suppose $L$ is a number field and $v$ is a place of $L$ dividing the place $p$ of $\rat$.  We shall write $L_v$ and $\rat_p$ to denote the completions of $L$ and $\rat$ with respect
to the $v$ and $p$.  Of course, we may view $\rat_p$ as a subfield of $L_v$ and we note the well-known identity
\begin{equation} \label{LocalGlobal}
	\sum_{v\mid p} [L_v:\rat_p] = [L:\rat].
\end{equation}
The right hand side of \eqref{LocalGlobal} is called the {\it global degree} while the summands on the left hand side are called {\it local degrees}.  We define the absolute value
$\|\ \|_v$ on $L_v$ to be the unique extension of the $p$-adic absolute value on $\rat_p$, and further, we define $|\ |_v$ by $|x|_v = \|x\|_v^{[L_v:\rat_p]/[L:\rat]}$ for all $x\in L_v$.
If $\alpha\in L$ and $p$ is a place of $\rat$, we note that
\begin{equation*}
	\prod_{v\mid p} |\alpha|_v = |\norm_{L/\rat}(\alpha)|_p^{1/[L:\rat]},
\end{equation*}
and consequently, we have the product formula
\begin{equation*}
	\prod_{v} |\alpha|_v = 1
\end{equation*}
for all $\alpha\in L^\times$.  Additionally, we define the {\it (logarithmic) Weil height} $h$ by
\begin{equation*}
	h(\alpha) = \sum_{v} \log\max\{1,|\alpha|_v\}
\end{equation*}
and observe that $h(\alpha)$ is independent of the choice of number field $L$ containing $\alpha$.  In this way, $h$ defines a map from $\alg$ to $[0,\infty)$ which satisfies
\begin{enumerate}
	\item $h(\alpha^n) = |n|\cdot h(\alpha)$ for all $n\in \intg$ and all $\alpha\in \algt$
	\item $h(\alpha\beta) \leq h(\alpha) + h(\beta)$ for all $\alpha,\beta\in \algt$.
	\item If $\alpha$ and $\beta$ are Galois conjugates over $\rat$ then $h(\alpha) = h(\beta)$.
	\item $h(\zeta\alpha) = h(\alpha)$ for all $\alpha\in \algt$ and all roots of unity $\zeta$.
\end{enumerate}
If $K$ is another number field and $\alpha\in\algt$ then the {\it Mahler measure of $\alpha$ over $K$} is defined by
\begin{equation*} \label{measure}
	m_K(\alpha) = [K(\alpha):K]\cdot h(\alpha).
\end{equation*}
The Mahler measure over $\rat$ has a long history dating back to a 1933 problem of D.H. Lehmer \cite{Lehmer} which asks whether there exists a constant $c > 0$ such that
$m_\rat(\alpha) \geq c$ for all non-torsion points $\alpha\in \algt$.  A variety of authors have established partial results in the direction of Lehmer's problem
(see \cite{BDM,Dobrowolski,Schinzel,Smyth,Voutier}, for instance) although the general case remains open.

If $t$ is a positive real number, we define the {\it $t$-metric Mahler measure of $\alpha$ over $K$} by
\begin{equation*} \label{TMetric}
	m_{K,t}(\alpha) = \inf\left\{ \left(\sum_{n=1}^N m_K(\alpha_n)^t\right)^{1/t}:N\in \nat,\ \alpha_n\in \alg,\ \alpha = \prod_{n=1}^N\alpha_n\right\}.
\end{equation*}
It is straightforward to verify that $m_{K,t}(\alpha\beta)^t \leq m_{K,t}(\alpha)^t + m_{K,t}(\beta)^t$ for all $\alpha,\beta\in \algt$ and all $t > 0$.  As a result, the map 
$(\alpha,\beta)\mapsto m_{K,t}(\alpha\beta^{-1})^t$ creates a well-defined metric on $\algt/\algt_\tors$.  Additionally, if $t>0$ and $\phi:\algt\to [0,\infty)$ is any function satisfying 
$0\leq \phi \leq m_K$ and $\phi(\alpha\beta)^t \leq \phi(\alpha)^t + \phi(\beta)^t$ for all $\alpha,\beta\in \algt$, then $\phi \leq m_{K,t}$.  For these reasons, we often think of $m_{K,t}$ as a maximal
metric version of the Mahler measure.   In the expected way, we also define the {\it $\infty$-metric Mahler measure over $K$} by
\begin{equation*}
	m_{K,\infty}(\alpha) =  \inf\left\{\max_{1\leq n\leq N} \{m_K(\alpha_n)\}:N\in \nat,\ \alpha_n\in \alg,\ \alpha = \prod_{n=1}^N\alpha_n\right\}.
\end{equation*}
It is clear from the definition that $\lim_{t\to\infty} m_{K,t}(\alpha) = m_{K,\infty}(\alpha)$.

The $t$-metric Mahler measures over $\rat$ have been studied extensively by Dubickas, Smyth, Fili, Jankauskas and the author in an assortment of previous articles
\cite{DubSmyth2,DubSmyth,FiliSamuels,JankSamuels,SamuelsInfimum,SamuelsCollection,SamuelsParametrized,SamuelsMetric,SamuelsFibonacci,SamuelsFibonacci2}.
For example, the author \cite{SamuelsCollection} showed that the infimum in the definition of $m_{\rat,t}(\alpha)$ is attained for all $\alpha\in \alg$ and all $t >0$.  
Subsequent articles established various techniques for finding points $(\alpha_1,\alpha_2,\cdots,\alpha_N)\in \alg$ which attain the infimum in $m_{\rat,t}(\alpha)$ for different values of $t$.

In the present paper, we are particularly interested in
an article of Jankauskas and the author \cite{JankSamuels} which establishes that the infimum in $m_{\rat,t}(\alpha)$ is attained by rational points when $\alpha\in \rat$.  Our goal is to study 
the extent to which this result generalizes to the metric Mahler measure over a number field.

The proof technique of \cite{JankSamuels} utilizes roughly the following outline.  If $\alpha_1,\alpha_2,\ldots,\alpha_N\in \alg$ are such that $\alpha = \alpha_1\alpha_2\cdots\alpha_N$,
we identify $\gamma_0,\gamma_1,\ldots,\gamma_N\in \rat$ such that
\begin{enumerate}[(i)]
	\item $\alpha= \gamma_0\gamma_1\cdots\gamma_N$
	\item $\gamma_0$ is a root of unity
	\item $m_\rat(\gamma_n) \leq m_\rat(\alpha_n)$ for all $1\leq n\leq N$.
\end{enumerate}
It is a simple consequence of these facts that the infimum in $m_{\rat,t}(\alpha)$ is attained by points in $\rat$.  Unfortunately, the method for constructing the points $\gamma_n$
uses various elementary divisibility properties of $\rat$ that are not present in a general number field.  Therefore, those methods will need substantial modification in 
order to yield analogous results for $m_{K,t}(\alpha)$.  We shall require a new definition.

We say that a number field $K$ is {\it balanced} if for every non-zero point $x\in \intK$ there exists a unit $u\in \intK$ such that $|ux|_v \geq 1$ for all Archimedean places $v$ of $K$.
If there exists $x\in \intK$ for which no such unit exists, then $K$ is called {\it unbalanced}.  Our main result is a generalization of the proof technique in \cite{JankSamuels} described above.

\begin{thm} \label{GeneralReplacement}
	Suppose that $K$ is a number field whose Hilbert class field $L$ is balanced.  Assume that $\alpha\in K^\times$ and $\alpha_1,\alpha_2,\ldots,\alpha_N\in\alg$ are such that 
	$\alpha = \alpha_1\alpha_2\cdots\alpha_N$.  Then there exist $\gamma_0,\gamma_1,\ldots,\gamma_N\in L$ satisfying the following three conditions:
	\begin{enumerate}[(i)]
		\item\label{Product} $\alpha= \gamma_0\gamma_1\cdots\gamma_N$
		\item\label{Unit} $\gamma_0$ is a unit in $\intL$
		\item\label{Measures} $h(\gamma_n) \leq m_K(\alpha_n)$ for all $1\leq n\leq N$.
	\end{enumerate}
\end{thm}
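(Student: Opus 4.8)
The plan is to descend the factorization $\alpha=\alpha_1\cdots\alpha_N$ first to a factorization of the fractional ideal $(\alpha)$ inside $\intK$, and then to promote that ideal factorization to one of $\alpha$ itself inside $L$, using one hypothesis on $L$ for each of the two steps. First I would replace each $\alpha_n$ by $\beta_n:=\norm_{K(\alpha_n)/K}(\alpha_n)\in K^\times$ and set $d_n=[K(\alpha_n):K]$. Since $\beta_n$ is a product of $d_n$ Galois conjugates of $\alpha_n$, property (2) of $h$ together with property (3) gives $h(\beta_n)\le d_n h(\alpha_n)=m_K(\alpha_n)$. Passing to a number field $M\supseteq L,K(\alpha_1),\dots,K(\alpha_N)$ that is Galois over $K$ and applying $\norm_{M/K}$ to $\alpha=\alpha_1\cdots\alpha_N$ yields $\prod_n\beta_n^{[M:K]/d_n}=\alpha^{[M:K]}$, so comparing valuations at every non-Archimedean place $\mathfrak p$ of $K$ (with $v_{\mathfrak p}$ the normalized valuation) gives
\[
v_{\mathfrak p}(\alpha)=\sum_{n=1}^N \frac{v_{\mathfrak p}(\beta_n)}{d_n}\qquad\text{for all }\mathfrak p .
\]

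Because the left side is an integer, the next step is to round. For each $\mathfrak p$ I would choose integers $b_n(\mathfrak p)$ with $\sum_n b_n(\mathfrak p)=v_{\mathfrak p}(\alpha)$, with $b_n(\mathfrak p)$ of the same sign as $v_{\mathfrak p}(\beta_n)$ (and $b_n(\mathfrak p)=0$ when $v_{\mathfrak p}(\beta_n)=0$), and with $|b_n(\mathfrak p)|\le\lceil |v_{\mathfrak p}(\beta_n)|/d_n\rceil\le |v_{\mathfrak p}(\beta_n)|$; such a choice exists by an elementary transportation argument, since the admissible integer sums fill the interval between $\sum_{v_{\mathfrak p}(\beta_n)<0}\lfloor v_{\mathfrak p}(\beta_n)/d_n\rfloor$ and $\sum_{v_{\mathfrak p}(\beta_n)>0}\lceil v_{\mathfrak p}(\beta_n)/d_n\rceil$, and $v_{\mathfrak p}(\alpha)$ lies in that interval. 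Putting $\mathfrak a_n=\prod_{\mathfrak p}\mathfrak p^{\,b_n(\mathfrak p)}$ gives fractional ideals of $\intK$ with $\prod_n\mathfrak a_n=(\alpha)$. Writing $\mathfrak a_n^{+},\mathfrak a_n^{-}$ for the numerator and denominator of $\mathfrak a_n$, the bound $|b_n(\mathfrak p)|\le|v_{\mathfrak p}(\beta_n)|$ together with the standard identification of $\sum_{\mathfrak p}\max\{0,-v_{\mathfrak p}(x)\}\log\norm_{K/\rat}\mathfrak p$ with $[K:\rat]$ times the non-Archimedean part of $h(x)$ translates into
\[
\tfrac{1}{[K:\rat]}\log\norm_{K/\rat}(\mathfrak a_n^{+})\le m_K(\alpha_n)\quad\text{and}\quad \tfrac{1}{[K:\rat]}\log\norm_{K/\rat}(\mathfrak a_n^{-})\le m_K(\alpha_n).
\]

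Now the hypotheses enter. Because $L$ is the Hilbert class field of $K$, the principal ideal theorem shows that every ideal of $\intK$ extends to a principal ideal of $\intL$; in particular $\mathfrak a_n\intL$ and its numerator and denominator are all principal in $\intL$. Because $L$ is balanced, I claim one can choose a generator $\gamma_n$ of $\mathfrak a_n\intL$ of least possible height among all its generators, namely
\[
h(\gamma_n)=\tfrac{1}{[L:\rat]}\max\bigl\{\log\norm_{L/\rat}(\mathfrak a_n^{+}\intL),\ \log\norm_{L/\rat}(\mathfrak a_n^{-}\intL)\bigr\}=\tfrac{1}{[K:\rat]}\max\bigl\{\log\norm_{K/\rat}(\mathfrak a_n^{+}),\ \log\norm_{K/\rat}(\mathfrak a_n^{-})\bigr\},
\]
which together with the two inequalities above yields $h(\gamma_n)\le m_K(\alpha_n)$, i.e.\ (iii). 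Finally I would put $\gamma_0:=\alpha\,\gamma_1^{-1}\cdots\gamma_N^{-1}$; then (i) holds by construction, and since $(\gamma_n)=\mathfrak a_n\intL$ with $\prod_n\mathfrak a_n=(\alpha)$ we get $(\gamma_0)=\intL$, so $\gamma_0\in\intL^\times$ and (ii) holds.

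The main obstacle is the last claim: that a balanced $L$ admits generators of principal fractional ideals of minimal height. Every generator $\gamma$ of $\mathfrak n\mathfrak d^{-1}$ (with $\mathfrak n,\mathfrak d$ coprime integral ideals) satisfies $h(\gamma)\ge\tfrac1{[L:\rat]}\max\{\log\norm_{L/\rat}\mathfrak n,\log\norm_{L/\rat}\mathfrak d\}$, because its non-Archimedean part already contributes $\tfrac1{[L:\rat]}\log\norm_{L/\rat}\mathfrak d$ and the product formula forces the Archimedean part to be at least $\tfrac1{[L:\rat]}\log(\norm_{L/\rat}\mathfrak n/\norm_{L/\rat}\mathfrak d)$; attaining equality requires (after replacing $\gamma$ by $\gamma^{-1}$ if the denominator dominates) a generator all of whose Archimedean absolute values are $\ge1$, and exhibiting such a generator is exactly the statement the balanced hypothesis is designed to provide — it is immediate from the definition for integral ideals, and in general one reduces to that case using that $\mathfrak n$ and $\mathfrak d$ are themselves principal. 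I expect essentially all of the technical work of the proof to be concentrated in this step, together with the verification that the rounding in the second step can be performed so as to keep both $\mathfrak a_n^{+}$ and $\mathfrak a_n^{-}$ small simultaneously; the passage to the Hilbert class field, by contrast, is needed only to guarantee that the ideals $\mathfrak a_n\intL$ are principal so that generators exist at all.
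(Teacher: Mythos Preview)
Your overall architecture matches the paper's: norm each $\alpha_n$ down to $\beta_n\in K^\times$, factor the fractional ideal $(\alpha)$ in $\intK$ into pieces compatible with the $\beta_n$, then pass to the Hilbert class field $L$ so those pieces become principal. Your valuation-by-valuation rounding is a clean alternative to the paper's ideal-theoretic Lemma (which builds the factors $I_n$ recursively by $I_1\cdots I_n=I_1\cdots I_{n-1}J_n+I$), and it yields exactly the same output: coprime integral ideals $\mathfrak a_n^{+},\mathfrak a_n^{-}$ with $\prod_n \mathfrak a_n^{+}/\mathfrak a_n^{-}=(\alpha)$ and $J_n\subseteq\mathfrak a_n^{+}$, $J'_n\subseteq\mathfrak a_n^{-}$ (your sign and magnitude constraints on $b_n(\mathfrak p)$ encode precisely these containments).

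The gap is in your use of the balanced hypothesis. The claim that a balanced field $L$ admits, for every principal fractional ideal $\mathfrak n\mathfrak d^{-1}$, a generator of height exactly $\tfrac{1}{[L:\rat]}\max\{\log\norm\mathfrak n,\log\norm\mathfrak d\}$ is false, and the sketched reduction to the integral case does not go through. Take $L=\rat(\sqrt 5)$, which is balanced (fundamental unit $\phi=(1+\sqrt 5)/2$, and every non-unit has norm $\geq 4>\phi$). The coprime primes $(4-\sqrt5)$ and $(4+\sqrt5)$ above $11$ have equal norm, so your formula predicts a generator $\gamma$ of $(4-\sqrt5)/(4+\sqrt5)$ with $h(\gamma)=\tfrac12\log 11$; this would force $|\gamma|_v=1$ at both real places, hence $\gamma=\pm 1$, which is impossible. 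More generally, when $\norm\mathfrak n\geq\norm\mathfrak d$ your equality demands a unit $u$ with $|u\,c/c'|_v\geq 1$ at every archimedean $v$, but ``balanced'' only guarantees such a $u$ for \emph{integers}, not for ratios; choosing $c,c'$ separately with $|c|_v,|c'|_v\geq1$ gives only $h(c/c')\leq\tfrac{1}{[L:\rat]}(\log\norm\mathfrak n+\log\norm\mathfrak d)$, off by a factor of two from what you need.

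The fix, which is what the paper does, is to exploit the divisibilities you already have rather than aim for the absolute minimum. Since $J_n\subseteq\mathfrak a_n^{+}$ and $J'_n\subseteq\mathfrak a_n^{-}$, after extending to $\intL$ and choosing generators $c_n,c'_n,d_n,d'_n$ one has $d_n=c_nr_n$ and $d'_n=c'_nr'_n$ with $r_n,r'_n\in\intL$. Now apply the balanced hypothesis to the \emph{integers} $r_n,r'_n$: pick units $u_n,u'_n$ with $|u_nr_n|_v,|u'_nr'_n|_v\geq 1$ at all archimedean $v$, and set $\gamma_n=(c_n/u_n)/(c'_n/u'_n)$. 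A short computation then gives
\[
\prod_{v\mid\infty}\max\{|c_n/u_n|_v,|c'_n/u'_n|_v\}\ \leq\ \prod_{v\mid\infty}\max\{|c_nr_n|_v,|c'_nr'_n|_v\}\ =\ \prod_{v\mid\infty}\max\{|d_n|_v,|d'_n|_v\},
\]
so $h(\gamma_n)\leq h(\beta_n)\leq m_K(\alpha_n)$. All of the pieces for this argument are already present in your proposal; you simply need to apply ``balanced'' to $r_n,r'_n$ instead of attempting the stronger minimal-height statement.
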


We remind the reader that the Hilbert class field $L$ of $K$ is the maximal Abelian unramified extension of $K$.  It is well-known that $\gal(L/K)$ is isomorphic to the ideal class group of $K$ (this is
a special case of \cite[Theorem 0.3]{Milne}), so in particular, $[L:K]$ is equal to the class number of $K$.  An important special case of Theorem \ref{GeneralReplacement} arises when $K$ has class number 
equal to $1$, in which case $L=K$ and $h(\gamma_n) = m_K(\gamma_n)$ for all $1\leq n\leq N$.  These observations give rise to a useful corollary.

\begin{cor} \label{ClassNumber1}
	Let $K$ be a balanced number field of class number equal to $1$.  Assume that $\alpha\in K^\times$ and $\alpha_1,\alpha_2,\ldots,\alpha_N\in\alg$ are such that 
	$\alpha = \alpha_1\alpha_2\cdots\alpha_N$.  Then there exist $\gamma_0,\gamma_1,\ldots,\gamma_N\in K$ satisfying the following three conditions:
	\begin{enumerate}[(i)]
		\item $\alpha= \gamma_0\gamma_1\cdots\gamma_N$
		\item$\gamma_0$ is a unit in $\intK$
		\item $m_K(\gamma_n) \leq m_K(\alpha_n)$ for all $1\leq n\leq N$.
	\end{enumerate}
\end{cor}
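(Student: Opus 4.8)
The plan is to deduce this immediately from Theorem \ref{GeneralReplacement}, since the corollary is exactly the special case $L = K$. First I would observe that because $K$ has class number equal to $1$, its Hilbert class field $L$ satisfies $[L:K] = 1$ and hence $L = K$; this was already noted in the discussion preceding the corollary, where $[L:K]$ is identified with the class number of $K$. Consequently the hypothesis of Theorem \ref{GeneralReplacement} that $L$ be balanced is precisely the hypothesis that $K$ be balanced, which is part of the assumptions here, so Theorem \ref{GeneralReplacement} applies to $\alpha$ and the factorization $\alpha = \alpha_1\alpha_2\cdots\alpha_N$.

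Applying Theorem \ref{GeneralReplacement} then produces $\gamma_0,\gamma_1,\ldots,\gamma_N \in L = K$ with $\alpha = \gamma_0\gamma_1\cdots\gamma_N$, with $\gamma_0$ a unit in $\intL = \intK$, and with $h(\gamma_n) \leq m_K(\alpha_n)$ for every $1 \leq n \leq N$. Conditions (i) and (ii) of the corollary are then literally conditions \eqref{Product} and \eqref{Unit} of the theorem. For condition (iii) I would use that the Mahler measure over $K$ restricts to the Weil height on $K$ itself: since $\gamma_n \in K$ we have $[K(\gamma_n):K] = 1$, so $m_K(\gamma_n) = [K(\gamma_n):K]\cdot h(\gamma_n) = h(\gamma_n) \leq m_K(\alpha_n)$, which is exactly (iii).

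I do not expect any genuine obstacle in this argument: all of the substance is contained in Theorem \ref{GeneralReplacement}, and the corollary merely records what happens when the Hilbert class field collapses to $K$, so that the degree factor $[K(\gamma_n):K]$ relating $h$ to $m_K$ disappears. The only points worth stating carefully are the identification $L = K$ via the class-number-one hypothesis and the identity $m_K = h$ on elements of $K$.
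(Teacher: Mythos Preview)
Your proposal is correct and is exactly the argument the paper gives: immediately before stating the corollary the paper notes that class number $1$ forces $L=K$, so Theorem~\ref{GeneralReplacement} applies with $h(\gamma_n)=m_K(\gamma_n)$. There is nothing to add.
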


The simplest examples of balanced number fields are the rational numbers and the imaginary quadratic extensions of $\rat$.  All such fields have exactly one Archimedean place $w$, so the product
formula forces $|x|_w \geq 1$ for all $x\in \intK$.  As a result, we may simply use $u=1$ to satisfy the definition of balanced.  Additionally, in all such fields, a point is a unit if and only if it is a root 
of unity.  Hence, $m_{K}(\gamma_0) = 0$ and we obtain the following direct generalization of \cite[Theorem 1.2]{JankSamuels}.

\begin{cor}\label{KInfimum}
	Suppose that $K=\rat$ or $K$ is an imaginary quadratic extension of $\rat$ of class number $1$.  If $\alpha\in K$ and $t > 0$ then there exist points $\alpha_1,\alpha_2,\ldots,\alpha_N\in K$
	such that $\alpha = \alpha_1\alpha_2\cdots\alpha_N$ and 
	\begin{equation*}
		m_{K,t}(\alpha)^t = \sum_{n=1}^N m_K(\alpha_n)^t.
	\end{equation*}
	Similarly, there exist $\alpha_1,\alpha_2,\ldots,\alpha_N\in K$ such that $\alpha = \alpha_1\alpha_2\cdots\alpha_N$ and $m_{K,\infty}(\alpha) =\max\{m_K(\alpha_n):1\leq n\leq N\}$.
\end{cor}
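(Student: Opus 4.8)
The plan is to deduce Corollary~\ref{KInfimum} from Corollary~\ref{ClassNumber1} together with an elementary finiteness argument built on Northcott's theorem. First recall, as observed in the paragraph preceding the statement, that $\rat$ and the imaginary quadratic fields are balanced and have class number $1$, that the units of $\intK$ are exactly the roots of unity in $K$, and that $m_K(\gamma)=h(\gamma)$ for every $\gamma\in K^\times$. Since $m_K$ takes nonzero arguments we may assume $\alpha\in K^\times$; and if $\alpha$ is a root of unity then $0\le m_{K,t}(\alpha)\le m_K(\alpha)=0$ and $m_{K,\infty}(\alpha)=0$, each attained by the one-term factorization $\alpha_1=\alpha$, so I would dispose of that case immediately and assume henceforth that $\alpha$ is not a root of unity.

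\emph{Reducing to factors in $K$.} Given any factorization $\alpha=\alpha_1\cdots\alpha_N$ with $\alpha_n\in\alg$, Corollary~\ref{ClassNumber1} supplies $\gamma_0,\gamma_1,\ldots,\gamma_N\in K$ with $\alpha=\gamma_0\gamma_1\cdots\gamma_N$, with $\gamma_0$ a unit of $\intK$ (hence a root of unity, so $m_K(\gamma_0)=0$), and with $m_K(\gamma_n)\le m_K(\alpha_n)$ for $1\le n\le N$; since $\alpha\ne 0$ all the $\gamma_n$ lie in $K^\times$. Then
\[
\sum_{n=0}^{N} m_K(\gamma_n)^t = \sum_{n=1}^{N} m_K(\gamma_n)^t \le \sum_{n=1}^{N} m_K(\alpha_n)^t,
\qquad
\max_{0\le n\le N} m_K(\gamma_n) \le \max_{1\le n\le N} m_K(\alpha_n),
\]
so the infima defining $m_{K,t}(\alpha)$ and $m_{K,\infty}(\alpha)$ are unchanged when restricted to factorizations whose factors all lie in $K^\times$. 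Next I would prune roots of unity: if some factor $\gamma_N$ is a root of unity $\zeta$ and $N\ge 2$, replace the pair $(\gamma_{N-1},\gamma_N)$ by the single factor $\zeta\gamma_{N-1}\in K^\times$, which by the invariance of $h$ under multiplication by roots of unity changes no height and hence decreases neither the $t$-sum nor the maximum. Iterating, and using that $\alpha$ is not itself a root of unity, I reduce to factorizations $\alpha=\gamma_1\cdots\gamma_N$ with $N\ge 1$ in which no $\gamma_n$ is a root of unity.

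\emph{Finiteness.} By Northcott's theorem the set $S=\{\gamma\in K^\times:h(\gamma)\le h(\alpha)\}$ is finite, because every $\gamma\in K$ satisfies $[\rat(\gamma):\rat]\le[K:\rat]\le 2$; and for the same reason $\delta:=\inf\{h(\gamma):\gamma\in K^\times,\ \gamma\text{ not a root of unity}\}$ is positive, since $\{\gamma\in K^\times:0<h(\gamma)\le 1\}$ is a finite set of points of positive height. The one-term factorization shows $m_{K,t}(\alpha)\le h(\alpha)$ and $m_{K,\infty}(\alpha)\le h(\alpha)$, so in both infima I may discard every factorization of value exceeding $h(\alpha)$. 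For a surviving factorization $\alpha=\gamma_1\cdots\gamma_N$ we then have $h(\gamma_n)\le h(\alpha)$, so each $\gamma_n\in S$; and in the $t$-metric case $N\delta^t\le\sum_n h(\gamma_n)^t\le h(\alpha)^t$ forces $N\le (h(\alpha)/\delta)^t$. Hence the $t$-metric infimum is taken over a finite nonempty set of factorizations and is attained. For the $\infty$-metric, $\max_{1\le n\le N}h(\gamma_n)$ always equals $h(\gamma_n)$ for some $\gamma_n\in S$, so it ranges over the finite set $\{h(\gamma):\gamma\in S\}$; an infimum of a finite-valued function over a nonempty domain is attained, and again the factorizing $\gamma_n$ lie in $K$.

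The main obstacle, and essentially the entire content, is the reduction in Corollary~\ref{ClassNumber1} (hence Theorem~\ref{GeneralReplacement}): it is what allows one to replace arbitrary factors $\alpha_n\in\alg$ by factors in $K$, whose degree over $\rat$ is bounded, so that Northcott's theorem applies. Without that step the factors could have arbitrarily large degree while $m_K(\alpha_n)$ stays bounded, and no finiteness would be available. Granting it, the only delicate points are the torsion-pruning (needed so that $N$ can be bounded in the $t$-metric case) and the remark that the $\infty$-metric objective has finite range; both are routine.
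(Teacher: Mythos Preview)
Your proof is correct and follows the same route the paper intends: apply Corollary~\ref{ClassNumber1}, use that in these fields units are roots of unity so $m_K(\gamma_0)=0$, and then invoke Northcott-type finiteness to see the restricted infimum is actually a minimum. The paper leaves the last step implicit here (it spells out the analogous Northcott appeal only in the proof of Corollary~\ref{KInfimumWeak}), so your write-up is simply a more detailed version of the same argument.
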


According to \cite[\S 1.6]{Neukirch}, the imaginary quadratic extensions of $\rat$ with class number $1$ are known to be $\rat(\sqrt{-d})$, where $d\in \{1,2,3,7,11,19,43,67,163\}$.  
These nine fields, along with the rational numbers, form the complete list of number fields that are covered by Corollary \ref{KInfimum}.  In our next section, we shall explore additional
examples where Corollary \ref{ClassNumber1} may be applied.

\section{Additional Examples of Balanced Number Fields}\label{Examples}

We let $\intK^\times$ denote the group of units in $\intK$ and remind the reader that there exists a non-negative integer $r$ such that $\intK^\times \simeq \intg^r\oplus K^\times_\tors$.
In this notation, $r$ is called the {\it rank of $\intK^\times$} (or simply the {\it rank of $K$}) and is denoted $r = \rank(K)$.  It follows from Dirchlet's 
Unit Theorem (see \cite[\S 1.7]{Neukirch}, for instance) that $\rank(K)$ is one less than the number of Archimedean places of $K$.  
For example, we have $r=0$ if and only if either $K=\rat$ or $K$ is an imaginary quadratic extension of $\rat$.  As we have noted prior to Corollary \ref{KInfimum}, all of these fields
are balanced. 

The situation becomes slightly more complicated when $\rank(K)=1$.  This scenario occurs in precisely the following three cases:
\begin{enumerate}[(a)]
	\item $K$ is real quadratic extension of $\rat$
	\item $K$ is a cubic extension of $\rat$ which is not totally real
	\item $K$ is a totally imaginary quartic extension of $\rat$.
\end{enumerate}
The following lemma is useful for producing balanced rank $1$ number fields.

\begin{lem} \label{Rank1}
	Suppose that $K$ is a number field of rank $1$ and $\xi\in\intK^\times\setminus K^\times_\tors$.  If $x\in\intK$ is such that $[K:\rat]\cdot h(\xi) \leq \log |\norm_{K/\rat}(x)|_\infty$
	then there exists a unit $u\in \intK$ such that $|ux|_v \geq 1$ for all Archimedean places $v$ of $K$.  In particular, if
	\begin{equation} \label{PrelimBalanced}
		1 < [K:\rat]\cdot h(\xi) \leq \log\min\{|\norm_{K/\rat}(y)|_\infty: y\in \intK\setminus\intK^\times\}.
	\end{equation}
	then $K$ is balanced.
\end{lem}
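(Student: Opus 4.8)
The plan is to exploit the near-one-dimensionality of the logarithmic embedding. Write $v_1, v_2$ for the two Archimedean places of $K$ (rank $1$ means there are exactly two), and consider the map $\ell(y) = (\log|y|_{v_1}, \log|y|_{v_2})$. By the product formula, for $y\in\intK\setminus\{0\}$ the image $\ell(y)$ lies on the line $x_1+x_2 = \frac1{[K:\rat]}\log|\norm_{K/\rat}(y)|_\infty =: s(y) \ge 0$, and $\ell$ sends $\intK^\times$ into the line $x_1+x_2=0$. Fix $\xi$ as in the statement; since $\xi$ is a non-torsion unit, $\ell(\xi) = (c,-c)$ for some $c\neq 0$, and after replacing $\xi$ by $\xi^{-1}$ we may assume $c>0$; note $c = \log\max\{1,|\xi|_{v_1}\} = h(\xi)$ as the single nonzero term in the height sum (here I would record carefully that $h(\xi) = \max\{\log|\xi|_{v_1}, \log|\xi|_{v_2}\} = c$). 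Then the powers $\xi^k$, $k\in\intg$, have $\ell(\xi^k) = (kc, -kc)$, so consecutive powers translate the point $\ell(x)$ along the line $x_1+x_2=s(x)$ by steps of length $c$ in the first coordinate.

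First I would fix a nonzero $x\in\intK$ and set $s = s(x) = \frac1{[K:\rat]}\log|\norm_{K/\rat}(x)|_\infty$, which is $\ge 0$ since $x\in\intK$. The hypothesis $[K:\rat]\cdot h(\xi)\le \log|\norm_{K/\rat}(x)|_\infty$ rewrites as $c = h(\xi) \le s$, i.e.\ the step size $c$ does not exceed the ``length budget'' $s$. Now I want an integer $k$ with $|\xi^k x|_{v_1}\ge 1$ and $|\xi^k x|_{v_2}\ge 1$, i.e.\ $\log|x|_{v_1} + kc \ge 0$ and $s - (\log|x|_{v_1}+kc) \ge 0$; writing $a = \log|x|_{v_1}$, this is the condition $-a/c \le k \le (s-a)/c$. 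Such an integer $k$ exists precisely because the interval $[-a/c,\ (s-a)/c]$ has length $s/c \ge 1$, so it contains at least one integer. Taking $u = \xi^k$ gives the required unit, which proves the first assertion. (If $x$ is itself a unit then $s=0$ forces $c\le 0$, contradicting $c>0$, so the hypothesis already rules that degenerate case out; alternatively one handles $x\in\intK^\times$ separately and trivially.)

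For the second assertion, suppose \eqref{PrelimBalanced} holds and let $x\in\intK\setminus\{0\}$ be arbitrary. If $x\in\intK^\times$ then, being a finitely generated rank-$1$ group, $x = \zeta\xi^k$ for a root of unity $\zeta$ and some $k\in\intg$, and $u = \xi^{-k}$ works (or simply observe $|u x|_v = 1$ for all Archimedean $v$ when $ux$ is a root of unity). If $x\notin\intK^\times$, then $|\norm_{K/\rat}(x)|_\infty \ge \min\{|\norm_{K/\rat}(y)|_\infty : y\in\intK\setminus\intK^\times\}$, and the right-hand inequality in \eqref{PrelimBalanced} gives $[K:\rat]\cdot h(\xi) \le \log|\norm_{K/\rat}(x)|_\infty$, so the first part of the lemma applies to $x$. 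In both cases we find the desired unit, so every nonzero $x\in\intK$ admits one and $K$ is balanced.

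The main obstacle — really the only subtlety — is bookkeeping the normalization of absolute values: one must be careful that $h(\xi)$ equals the single positive coordinate $c$ of $\ell(\xi)$ (using that the two Archimedean contributions to $h$ are $\max\{0,a\}$ and $\max\{0,-a\}$ and exactly one is positive when $\xi$ is a non-torsion unit), and that $\frac1{[K:\rat]}\log|\norm_{K/\rat}(x)|_\infty$ equals $|x|_{v_1}+|x|_{v_2}$ in logarithmic form with the chosen normalization $|x|_v = \|x\|_v^{[K_v:\rat_p]/[K:\rat]}$. Once these identifications are in place, the argument is just the elementary observation that an interval of length $\ge 1$ on the real line contains an integer. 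The strict inequality $1 < [K:\rat]\cdot h(\xi)$ in \eqref{PrelimBalanced} is what guarantees the lower bound on $\log|\norm_{K/\rat}(y)|_\infty$ is met vacuously-or-not by forcing $c>0$ cleanly; I would double-check whether it is strictly needed or merely convenient, but it costs nothing to keep it as stated.
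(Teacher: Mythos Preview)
Your argument is correct and is essentially the same as the paper's: both exploit that $K$ has exactly two Archimedean places and that translating $x$ by powers of $\xi$ moves $\log|x|_{v_1}$ in steps of size $c=h(\xi)$, while the hypothesis $c\le s$ guarantees a power landing in the target window $[0,s]$. The paper carries this out by choosing the minimal $\ell$ with $|\xi^\ell x|_{w_1}\ge 1$ and then manipulating absolute values directly to force $|\xi^\ell x|_{w_2}\ge 1$, whereas you phrase the identical computation via the logarithmic embedding and the observation that the interval $[-a/c,(s-a)/c]$ has length $s/c\ge 1$; the second statement is handled identically in both (units treated separately, non-units fed into the first part).
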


Unfortunately, our results are not enough to obtain a result as strong as Corollary \ref{KInfimum} for number fields of rank $1$.
Indeed, the unit $\gamma_0$ which arises from Corollary \ref{ClassNumber1} may not be a root of unity, and hence, it could have non-zero Mahler measure.
However, we are able to obtain a partial result dealing with the case $t=\infty$.

\begin{cor} \label{KInfimumWeak}
	Suppose that $K$ is a number field of rank $1$ and class number $1$.  Further assume that there exists a unit $\xi$ of $K$ such that 
	\begin{equation} \label{WeakHeight}
			1 < [K:\rat]\cdot h(\xi) \leq \log\min\{|\norm_{K/\rat}(y)|_\infty: y\in \intK\setminus\intK^\times\}.
	\end{equation}
	If $\alpha\in K^\times\setminus \intK^\times$ then there exist $\alpha_1,\alpha_2,\ldots,\alpha_N\in K$ 
	such that $\alpha = \alpha_1\alpha_2\cdots\alpha_N$ and 
	\begin{equation*}
		m_{K,\infty}(\alpha) = \max\left\{m_K(\alpha_n): 1\leq n\leq N\right\}.
	\end{equation*}
\end{cor}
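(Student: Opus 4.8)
The plan is to reduce to the $\infty$-metric analog of the rational-points argument using Corollary \ref{ClassNumber1}, with an extra bookkeeping step to absorb the (possibly non-torsion) unit $\gamma_0$ into one of the factors. First I would observe that hypothesis \eqref{WeakHeight} is precisely condition \eqref{PrelimBalanced} of Lemma \ref{Rank1}, so $K$ is balanced; since $K$ also has class number $1$, Corollary \ref{ClassNumber1} applies to $K$ directly (with $L=K$). Now fix $\alpha\in K^\times\setminus\intK^\times$ and let $\epsilon>0$. Choose $\beta_1,\ldots,\beta_M\in\alg$ with $\alpha=\beta_1\cdots\beta_M$ and $\max_n m_K(\beta_n)\le m_{K,\infty}(\alpha)+\epsilon$. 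Applying Corollary \ref{ClassNumber1} to this factorization yields $\gamma_0,\gamma_1,\ldots,\gamma_M\in K$ with $\alpha=\gamma_0\gamma_1\cdots\gamma_M$, $\gamma_0\in\intK^\times$, and $m_K(\gamma_n)\le m_K(\beta_n)$ for all $1\le n\le M$.

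The key step is to show that, at the cost of slightly enlarging the factorization, we may take $\gamma_0$ to be a root of unity. Since $\rank(K)=1$, the unit group is $\intK^\times\simeq\intg\oplus K^\times_\tors$, so $\gamma_0=\zeta\xi_0^k$ for a root of unity $\zeta$, some fixed fundamental unit $\xi_0$, and some $k\in\intg$; and $m_K(\gamma_0)=|k|\cdot m_K(\xi_0)=|k|\cdot[K:\rat]\cdot h(\xi_0)$. Replacing $\xi$ by $\xi_0$ in \eqref{WeakHeight} (any unit of infinite order has the same height up to the sign of the exponent, and \eqref{WeakHeight} depends only on $h$ of such a unit), the hypothesis gives $m_K(\xi_0)=[K:\rat]\cdot h(\xi_0)>1$. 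On the other hand, because $\alpha\notin\intK^\times$, at least one $\gamma_n$ with $n\ge1$ is a non-unit; among those choose the one of largest Mahler measure, say $\gamma_j$, and note $m_K(\gamma_j)\ge\log\min\{|\norm_{K/\rat}(y)|_\infty:y\in\intK\setminus\intK^\times\}\ge m_K(\xi_0)$ by \eqref{WeakHeight}. Now write $\gamma_0=\xi_0\cdot\xi_0\cdots\xi_0\cdot\zeta$ (|k| copies of $\xi_0^{\pm1}$ and one root of unity), thereby refining $\alpha=\gamma_0\gamma_1\cdots\gamma_M$ into a factorization indexed by $n=0,1,\ldots,M$ together with $|k|$ new copies of $\xi_0^{\pm1}$; each new factor has Mahler measure $m_K(\xi_0)\le m_K(\gamma_j)\le\max_n m_K(\beta_n)$, and $\zeta$ has Mahler measure $0$. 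Thus the new factorization $\alpha=\prod\alpha_n$ lies entirely in $K$ and satisfies $\max_n m_K(\alpha_n)\le\max_n m_K(\beta_n)\le m_{K,\infty}(\alpha)+\epsilon$, which forces $\max_n m_K(\alpha_n)=m_{K,\infty}(\alpha)$ since the reverse inequality is immediate from the definition.

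Finally I would address attainment rather than mere approximation: by \cite{SamuelsCollection} the infimum defining $m_{\rat,\infty}$ is attained, and the analogous compactness argument — bounding the number of relevant factors $N$ and the heights $h(\alpha_n)$, then invoking Northcott finiteness — shows $m_{K,\infty}(\alpha)$ is attained by some finite factorization in $\alg$; running the reduction above on an attaining factorization (so $\epsilon=0$) produces the desired $\alpha_1,\ldots,\alpha_N\in K$ with $m_{K,\infty}(\alpha)=\max\{m_K(\alpha_n):1\le n\le N\}$. The main obstacle is the middle step: one must verify that absorbing $\gamma_0=\zeta\xi_0^k$ as $|k|$ extra unit factors never introduces a factor of Mahler measure exceeding the worst non-unit factor $\gamma_j$, which is exactly what the quantitative hypothesis \eqref{WeakHeight} is designed to guarantee — the lower bound $1<[K:\rat]h(\xi)$ rules out the degenerate situation while the upper bound ties $m_K(\xi_0)$ to the minimal norm of a non-unit, i.e. to the smallest possible $m_K(\gamma_j)$.
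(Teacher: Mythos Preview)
Your proposal is correct and follows essentially the same route as the paper: use Lemma \ref{Rank1} and Corollary \ref{ClassNumber1} to pass to a $K$-factorization $\alpha=\gamma_0\gamma_1\cdots\gamma_M$, bound $m_K(\xi_0)\le m_K(\gamma_j)$ for a non-unit $\gamma_j$ via the upper inequality in \eqref{WeakHeight}, split $\gamma_0=\zeta\xi_0^k$ into $|k|$ copies of $\xi_0^{\pm 1}$, and finish with Northcott. The only minor difference is that the paper concludes attainment directly from Northcott applied to $K$-factorizations (having shown the infimum over $\alg$ equals the infimum over $K$), whereas you detour through attainment of $m_{K,\infty}$ over $\alg$; the paper's route is slightly cleaner since \cite{SamuelsCollection} treats $m_{\rat,t}$ rather than $m_{K,t}$.
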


Luckily, there is a standard recipe for creating number fields $K$ satisfying the hypotheses of Corollary \ref{KInfimumWeak}.  Select an irreducible polynomial 
$f \in \intg[x]$ having constant term equal to $\pm 1$ satisfying one of the following three properties:
\begin{enumerate}[(i)]
	\item $\deg f = 2$ and $f$ has a real root $\xi$ with $1 < |\xi|_\infty < 2$
	\item $\deg f = 3$ and $f$ has a unique real root $\xi$ such that $1 < |\xi|_\infty < 2$
	\item $\deg f = 4$ and $f$ has four imaginary roots with one of those roots $\xi$ satisfying $1 < |\xi|_\infty < \sqrt 2$.
\end{enumerate}
In these cases, $\rat(\xi)$ must satisfy the hypotheses of Corollary \ref{KInfimumWeak}.  For instance, we could use $f(x) = x^2 - x - 1$.  Then the golden ratio $\xi = (1+\sqrt 5)/2$
is a root of $f$, and thus, $K = \rat(\xi)$ satisfies \eqref{WeakHeight}.  Similarly, we may set $f(x) = x^3 - x -1$ so that $f$ is the famous polynomial studied by Chris Smyth in \cite{Smyth}.  
In this case, $f$ has exactly one real root $\xi =1.32\ldots$, and therefore, $K = \rat(\xi)$ also satisfies \eqref{WeakHeight}.  In both of these cases, these number fields are known to have class 
number equal to $1$ so that Corollary \ref{KInfimumWeak} applies.

We conclude this section by providing the reader with two additional examples of rank $1$ number fields.  First, we give an example of an unbalanced number field, and second, we give an example showing that the converse
of the second statement of Lemma \ref{Rank1} is false.

\begin{ex}
	We claim that $K = \rat(\sqrt 3)$ is not balanced.  To see this, we must identify a non-zero point $x\in \intK$ for which there is no unit $u\in \intK$ satisfying $|ux|_v \geq1$ for all $v\mid \infty$.
	Since $K$ is a real quadratic number field, it must have exactly two Archimedean places $w_1$ and $w_2$.  Moreover, since $3\not\equiv 1\mod 4$, 
	it is well-known that $\intK = \intg[\sqrt 3]$ (see \cite[\S 2.7]{Jarvis}).  As a result, we may assume without loss of generality that
	\begin{equation*}
		\|a+b\sqrt 3\|_{v_1} = |a+b\sqrt 3|\quad\mbox{and}\quad \|a+b\sqrt 3\|_{v_2} = |a-b\sqrt 3|,
	\end{equation*}
	where $|\ |$ denotes the usual absolute value on $\real$ and $\sqrt 3$ is the positive square root of $3$.  Additionally, $\rank(K) = 1$ so that $\intK^\times/K^\times_\tors$ is
	cyclic.  Using the technique described in \cite[\S 6.4 and \S 6.5]{Cohn}, we find that $\xi = 2 + \sqrt 3$ is a generator of this group.\footnote{$\xi$ is commonly called a {\it fundamental unit}.}
	
	Now let $x = 1 + \sqrt 3$ and assume that $u$ is a unit in $K$ such that $\|ux\|_{v_1} \geq 1$ and $\|ux\|_{v_2} \geq 1$.  There exists $\ell\in \intg$ 
	such that $u = \pm \xi^\ell$.  Thus
	\begin{equation*}
		1\leq \|\xi\|^\ell_{v_1}\|x\|_{v_1} = \|2 + \sqrt 3\|^\ell_{v_1}\|1 + \sqrt 3\|_{v_1} = (2+\sqrt 3)^\ell(1 +\sqrt 3) < (2+\sqrt 3)^{\ell + 1},
	\end{equation*} 
	which forces $\ell > 0$ and implies that
	\begin{equation*}
		1 \leq \|\xi\|^\ell_{v_2}\|x\|_{v_2} = (2- \sqrt 3)^\ell (\sqrt 3 - 1) < 1^\ell\cdot 1 = 1,
	\end{equation*}
	a contradiction. 
\end{ex}

\begin{ex}
	We now assert that $K = \rat(\sqrt 2)$ is balanced even though there is no unit $\xi \in K$ satisfying \eqref{PrelimBalanced}.  First, we note that
	$1 + \sqrt 2$ is a fundamental unit of $K$ and $h(1+\sqrt 2) = \frac{1}{2} \log (1 + \sqrt 2)$.  This implies that
	\begin{equation*}
		\log|\norm_{K/\rat}(2 + \sqrt 2)|_\infty = \log 2 < \log (1+\sqrt 2)  = [K:\rat]\cdot h(1+\sqrt 2).
	\end{equation*}
	If $\xi$ is another unit but not a root of unity, then there must exist a non-zero integer $\ell$ such that $\xi = \pm (1+\sqrt 2)^\ell$.   It follows that
	\begin{equation*}
		[K:\rat]\cdot h(\xi) \geq [K:\rat]\cdot h(1+\sqrt 2) > \log|\norm_{K/\rat}(2 + \sqrt 2)|_\infty,
	\end{equation*}
	so that $K$ fails to satisfy \eqref{PrelimBalanced} for any unit $\xi\in \intK$.  
	
	To see that $K$ is balanced, we assume that $x$ is a non-zero point in $\intK$.  If $x$ is unit then we use $u=x^{-1}$ to satisfy the definition of balanced.  If $|\norm_{K/\rat}(x)|_\infty \geq 3$ then 
	we have $[K:\rat]\cdot h(1+\sqrt 2) \leq \log |\norm_{K/\rat}(x)|_\infty$ and we may apply the first statement of Lemma \ref{Rank1}.  Therefore, it remains only to consider the case that
	$|\norm_{K/\rat}(x)|_\infty = 2$. 
	
	Since $\intK = \intg[\sqrt 2]$ we may write $x = a + b\sqrt 2$, where $a,b\in \intg$, and since we have assumed that $|\norm_{K/\rat}(x)|_\infty = 2$, we get
	$a^2 -2b^2 = \pm 2$.  It follows now that $a$ is even and 
	\begin{equation*}
		\left(b + \frac{a}{2}\sqrt 2\right)\left(b - \frac{a}{2}\sqrt 2\right) = b^2 - 2\left(\frac{a}{2}\right)^2 = \pm 1
	\end{equation*}
	which implies that $b + \frac{a}{2}\sqrt 2$ is a unit in $\intK$.  Now setting $u = (b + \frac{a}{2}\sqrt 2)^{-1}$ we get that
	\begin{equation*}
		x = a + b\sqrt 2 = u^{-1}\sqrt 2
	\end{equation*}
	and it follows that $||ux||_v  = \sqrt 2 > 1$ for all $v\mid\infty$.
		
\end{ex}

\section{Proofs of Main Results}\label{KRat}

The proof of Theorem \ref{GeneralReplacement} makes use of fractional ideals so we take a few moments to remind the reader of the relevant facts and notation (see \cite[p. 760]{DF} for further detail
than what is provided here).  Suppose that $R$ is an integral domain with field of fractions $K$.  An $R$-submodule $I$ of $K$ is called a {\it fractional ideal of $R$} if there exists 
$d\in R\setminus\{0\}$ such that $dI \subseteq R$.  Of course, every ideal of $R$ is a fractional ideal and such ideals are sometimes called {\it integral ideals}.    If there exists a fractional
ideal $J$ of $R$ such that $IJ = R$ then we say that $I$ is {\it invertible} and that $J$ is the {\it inverse of $I$}, denoted $J = I^{-1}$.

If $R$ is a subring of another integral domain $S$ and $I$ is a fractional ideal of $R$, we define the {\it extension of $I$ to $S$} by
\begin{equation*}
	IS = \left\{ \sum_{n=1}^N a_ns_n: N\in \nat,\ a_n\in I,\ s_n\in S\right\}.
\end{equation*}
It is easily verified that $IS$ equals the intersection of all fractional ideals of $S$ which contain $I$.  Moreover, we note a series of straightforward facts regarding extensions of fractional ideals.

\begin{lem} \label{FractionalIdeals}
	Suppose that $R$ and $S$ are integral domains such that $R$ is a subring of $S$ and assume that $K$ is the field of fractions of $R$.
	\begin{enumerate}[(i)]
		\item\label{ProdExtend} If $I$ and $J$ are fractional ideals of $R$ then $(IJ)S = (IS)(JS)$.
		\item\label{ExtendInverse} If $I$ is an invertible fractional ideal of $R$ then $IS$ is invertible and $(IS)^{-1} = I^{-1}S$.
		\item\label{PrincipalExtend} If $\alpha\in K$ then $(\alpha R)S = \alpha S$
		\item\label{CoPrimeExtend} If $I$ and $J$ are integral ideals of $R$ such that $I+J = R$ then $IS + JS = S$.
		\item\label{RatioContain} If $I,I'J,J'$ are integral ideals of $R$ with $I+I' = R$ and $IJ' = I'J$ then $J\subseteq I$ and $J'\subseteq I'$.
	\end{enumerate}
\end{lem}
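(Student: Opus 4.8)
The plan is to establish the five assertions in the order listed, deducing (ii) from (i) and treating (iii)--(v) more or less independently. Throughout I would work inside the field of fractions $\mathrm{Frac}(S)$, observing that $R\subseteq S$ forces $K = \mathrm{Frac}(R)\subseteq \mathrm{Frac}(S)$, so that every sum and product of (fractional) ideals written below makes sense in a common overring. I would also record the two trivialities used repeatedly: $RS = S$ (because $1\in R$), and if $dI\subseteq R$ with $d\in R\setminus\{0\}$ then $d(IS) = (dI)S\subseteq RS = S$, so the \emph{extension} $IS$ really is a fractional ideal of $S$.

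For (i) I would verify the two containments directly from the definition of $IS$. A typical element of $(IJ)S$ is a finite sum $\sum_k a_ks_k$ with $a_k\in IJ$ and $s_k\in S$; writing each $a_k = \sum_\ell b_{k\ell}c_{k\ell}$ with $b_{k\ell}\in I$, $c_{k\ell}\in J$ displays it as $\sum_{k,\ell} b_{k\ell}(c_{k\ell}s_k)\in (IS)(JS)$. Conversely, expanding $\bigl(\sum_i a_is_i\bigr)\bigl(\sum_j b_jt_j\bigr)$ with $a_i\in I$, $b_j\in J$, $s_i,t_j\in S$ gives a sum of terms $(a_ib_j)(s_it_j)\in (IJ)S$. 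Part (ii) is then immediate: applying (i) to $II^{-1} = R$ yields $(IS)(I^{-1}S) = (II^{-1})S = RS = S$, so $IS$ is invertible with inverse $I^{-1}S$. For (iii), a typical element of $(\alpha R)S$ is $\sum_n(\alpha r_n)s_n = \alpha\sum_n r_ns_n\in\alpha(RS) = \alpha S$, and conversely $\alpha s = (\alpha\cdot 1)s\in(\alpha R)S$ for each $s\in S$.

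Part (iv) follows at once: $I+J = R$ gives $a+b = 1$ for some $a\in I$, $b\in J$, so $1 = a+b\in IS+JS$; since $IS+JS$ is an ideal of $S$ contained in $S$ (as $I,J\subseteq R$), it equals $S$. For (v) the key move is to multiply through by the unit ideal $R = I+I'$: then $J = J(I+I') = IJ+I'J$, where $IJ\subseteq I$ trivially and $I'J = IJ'\subseteq I$ by hypothesis, so $J\subseteq I$; symmetrically $J' = J'(I+I') = IJ'+I'J'\subseteq I'$ because $IJ' = I'J\subseteq I'$ and $I'J'\subseteq I'$. I do not expect a genuine obstacle anywhere; the only points requiring a moment's attention are interpreting all products inside a common overring of $R$ and $S$, and spotting the ``multiply by $I+I'=R$'' trick in part (v).
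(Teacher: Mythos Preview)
Your proof is correct and follows essentially the same route as the paper: both verify (i) by the same double-containment element chase, derive (ii) from (i), handle (iii) and (iv) by the obvious one-line arguments, and obtain (v) by multiplying through by $I+I'=R$ and using $IJ'=I'J$ to force the desired containment. Your added remarks about working inside $\mathrm{Frac}(S)$ and checking that $IS$ is genuinely a fractional ideal are helpful clarifications the paper omits, but the underlying arguments are the same.
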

\begin{proof}
	Assuming that $x\in (IJ)S$ we select $x_n\in IJ$, $s_n\in S$ and $N\in \nat$ such that 
	\begin{equation*}
		x = \sum_{n=1}^N x_ns_n.
	\end{equation*}
	Additionally, we let $a_{n,i}\in I$, $b_{n,i}\in J$ and $k_n\in \nat$ be such that
	\begin{equation*}
		x_n = \sum_{i=1}^{k_n} a_{n,i}b_{n,i}
	\end{equation*}
	which yields that
	\begin{equation*}
		x = \sum_{n=1}^N \left( \sum_{i=1}^{k_n} a_{n,i}b_{n,i}\right)s_n = \sum_{n=1}^N\sum_{i=1}^{k_n} (a_{n,i}\cdot 1)\cdot (b_{n,i}s_n) \in (IS)(JS).
	\end{equation*}
	Now let $x\in (IS)(JS)$ so that there exist $x_n\in IS$, $y_n\in JS$ and $N\in \nat$ such that
	\begin{equation*}
		x = \sum_{n=1}^Nx_ny_n
	\end{equation*}
	Next we let $k_n,\ell_n\in \nat$, $a_{n,i}\in I$, $b_{n,j}\in J$, $r_{n,i},s_{n,j}\in S$ such that 
	\begin{equation*}
		x_n = \sum_{i=1}^{k_n} a_{n,i}r_{n,i}\quad\mbox{and}\quad y_n = \sum_{j=1}^{\ell_n} b_{n,j}s_{n,j}.
	\end{equation*}
	This means that
	\begin{equation*}
		x = \sum_{n=1}^N \left(\sum_{i=1}^{k_n} a_{n,i}r_{n,i}\right)\left(\sum_{j=1}^{\ell_n} b_{n,j}s_{n,j}\right) = \sum_{n=1}^N \sum_{i=1}^{k_n}\sum_{j=1}^{\ell_n} a_{n,i}b_{n,j}r_{n,i}s_{n,j} \in (IJ)S
	\end{equation*}
	establishing \eqref{ProdExtend}, and \eqref{ExtendInverse} follows by applying \eqref{ProdExtend} with $J = I^{-1}$.
	
	For \eqref{PrincipalExtend}, we clearly have $\alpha S\subseteq (\alpha R)S$.  If $x\in (\alpha R)S$ we write
	\begin{equation*}
		x = \sum_{n=1}^N \alpha r_n s_n
	\end{equation*}
	for some $r_n\in R$ and $s_n\in S$.  Thus
	\begin{equation*}
		x = \alpha\sum_{n=1}^N r_ns_n \in \alpha S
	\end{equation*}
	verifying \eqref{PrincipalExtend}.
	
	For \eqref{CoPrimeExtend} we write $1 = a + b$ for some $a\in I$ and $b\in J$ so that $1 = a\cdot 1 + b\cdot 1\in IS + JS$.  But $IS+JS$ is an integral ideal of $S$ so that 
	$IS + JS = S$.  To verify \eqref{RatioContain}, we observe that $IJ' + IJ = I'J + IJ$.  Using the distributive law for ideal multiplication (see \cite[\S 7.3, Exercise 35(a)]{DF}), we get that
	\begin{equation*}
		I(J' + J) = (I'+I)J = RJ = J,
	\end{equation*}
	and we conclude that $J \subseteq I$.  A similar argument establishes that $J'\subseteq I'$ completing the proof.
\end{proof}

If $R$ is a Dedekind domain then every fractional ideal of $R$ is invertible, and if $I$ and $J$ are fractional ideals of $R$, we shall write $I/J = IJ^{-1}$.
We caution the reader that, in our notation, $R/I$ is simply an alternate way of writing $I^{-1}$ and does not refer to a quotient ring.
Still assuming that $R$ is Dedekind domain, every integral ideal may be factored uniquely into prime ideals of $R$.
A pair of integral ideals $I$ and $J$ have no common prime factors if and only if $I+J = R$, and in this case, $I$ and $J$ are called {\it relatively prime}.  

If $A$ is a fractional ideal of any domain $R$, then $A$ must have the form $d^{-1} I$ for some integral ideal $I$ of $R$ and $d\in R\setminus\{0\}$.
As a result, we see that $A = (d^{-1}R)I= (dR)^{-1} I = I/(dR)$, so in particular, $A$ is a ratio of integral ideals.  Further assuming that $R$ is a Dedekind domain, there must exist a 
relatively prime pair of integral ideals $I$ and $J$ such that $A = I/J$.

If $I,J_1,J_2,\ldots,J_N$ are integral ideals of a Dedekind domain $R$ such that $J_1J_2\cdots J_N \subseteq I$ then our proof of Theorem \ref{GeneralReplacement} requires that 
we identify a set of integral ideals $I_1,I_2,\ldots,I_N$ such that $I = I_1I_2\cdots I_N$ and $J_n \subseteq I_n$ for all $1\leq n\leq N$.  The following lemma, which is analogous to
\cite[Lemma 2.2]{JankSamuels}, shows a method for constructing the ideals $I_n$.

\begin{lem} \label{DedekindDomainAlgorithm}
	Suppose that $R$ is a Dedekind domain and $I,I_1,\ldots,I_N,J_1,\ldots, J_N$ are integral ideals of $R$ such that
	\begin{enumerate}[(i)]
		\item\label{FirstDivisibility} $J_1J_2\cdots J_N \subseteq I$
		\item\label{ProgressDivisibility} $I_1I_2\cdots I_n = I_1I_2\cdots I_{n-1} J_n + I$ for all $1\leq n\leq N$.
	\end{enumerate}
	Then $I = I_1I_2\cdots I_N$ and $J_n \subseteq I_n$ for all $1\leq n\leq N$.
\end{lem}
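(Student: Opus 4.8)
The plan is to build everything out of two ingredients: the recursive identity \eqref{ProgressDivisibility}, which lets one replace the products $I_1\cdots I_n$ by the much more transparent expression $J_1\cdots J_n + I$, and the fact that ideals in a Dedekind domain are invertible, which makes cancellation legitimate.

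First I would show, by induction on $n$, that $I_1I_2\cdots I_n = J_1J_2\cdots J_n + I$ for every $1\leq n\leq N$. With the usual convention that an empty product of ideals equals $R$, the case $n=1$ is exactly \eqref{ProgressDivisibility}. For the inductive step, one substitutes the identity for $n-1$ into \eqref{ProgressDivisibility}, expands using the distributive law for ideal multiplication, and discards the term $IJ_n$, which is contained in $I$ because $J_n\subseteq R$; this leaves $I_1\cdots I_n = J_1\cdots J_n + I$. Taking $n=N$ and using hypothesis \eqref{FirstDivisibility}, which says precisely that $J_1\cdots J_N\subseteq I$ and hence $J_1\cdots J_N + I = I$, gives the first conclusion $I = I_1I_2\cdots I_N$.

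For the inclusions $J_n\subseteq I_n$, I would fix $n$ and read off from \eqref{ProgressDivisibility} that $(I_1\cdots I_{n-1})J_n \subseteq (I_1\cdots I_{n-1})I_n$. Since $R$ is Dedekind, the fractional ideal $I_1\cdots I_{n-1}$ is invertible, so multiplying both sides by its inverse --- and using associativity of fractional ideal multiplication together with the fact that multiplication by a fixed fractional ideal preserves inclusions --- yields $J_n\subseteq I_n$. The step most in need of care is this last cancellation: in an arbitrary domain one cannot cancel ideal factors, and it is precisely the Dedekind hypothesis that rescues the argument. (Alternatively, one could write $I/(I_1\cdots I_n)$ as a ratio of relatively prime integral ideals and appeal to part \eqref{RatioContain} of Lemma \ref{FractionalIdeals}, but the direct cancellation seems cleanest.)
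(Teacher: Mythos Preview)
Your proposal is correct and follows essentially the same approach as the paper's proof: the induction establishing $I_1\cdots I_n = J_1\cdots J_n + I$ (with the same absorption of $IJ_n$ into $I$) and then the cancellation of $I_1\cdots I_{n-1}$ using invertibility of ideals in a Dedekind domain are exactly what the paper does.
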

\begin{proof}
	To establish the first conclusion of the lemma, we shall first prove by induction on $n$ that 
	\begin{equation} \label{PrelimIdealEquality}
		I_1I_2\cdots I_n = J_1J_2\cdots J_n + I
	\end{equation}
	for all $1\leq n\leq N$.  If $n=1$ then \eqref{ProgressDivisibility} implies that $I_1 = J_1 + I$ so the base case is obtained immediately.
	Now assuming that $2\leq n\leq N$ and $I_1I_2\cdots I_{n-1} = J_1J_2\cdots J_{n-1} + I$,
	we may multiply both sides of this equality by $J_n$ and use the distributive law for ideal multiplication to conclude that 
	\begin{equation*}
		I_1I_2\cdots I_{n-1}J_n = J_1J_2\cdots J_{n-1}J_n + IJ_n.
	\end{equation*}
	Now substitute into \eqref{ProgressDivisibility} to deduce that 
	\begin{equation*}
		I_1I_2\cdots I_n = J_1J_2\cdots J_n + IJ_n +I = J_1J_2\cdots J_n + I(J_n + R) = J_1J_2\cdots J_n + I
	\end{equation*}
	establishing \eqref{PrelimIdealEquality}.  Now by applying \eqref{PrelimIdealEquality} with $n=N$ we get that
	\begin{equation*}
		I_1I_2\cdots I_N = J_1J_2\cdots J_N + I
	\end{equation*}
	and the first conclusion of the lemma follows from \eqref{FirstDivisibility}.
	
	To verify the second conclusion, we observe that
	\begin{equation*}
		I_1I_2\cdots I_{n-1} J_n \subseteq I_1I_2\cdots I_{n-1} J_n + I  = I_1I_2\cdots I_{n-1} I_n.
	\end{equation*}
	Multiplication by fractional ideals preserves set containment, so the result follows by multiplying both sides by the inverse of $I_1 I_2\cdots I_{n-1}$.
\end{proof}

Under the assumption $J_1J_2\cdots J_N \subseteq I$, Lemma \ref{DedekindDomainAlgorithm} provides an algorithm for creating a set 
of ideals $I_1,I_2,\ldots, I_N$ satisfying $I= I_1I_2\cdots I_N$ and $J_n \subseteq I_n$ for all $1\leq n\leq N$.  Indeed, all ideals of a Dedekind domain are invertible fractional ideals
and it is easily shown by induction on $n$ that $I_1^{-1} I_2^{-1}\cdots I_{n-1}^{-1}I$ is an integral ideal of $R$.  Hence, we may define
\begin{equation*}
	I_n = J_n + I_1^{-1} I_2^{-1}\cdots I_{n-1}^{-1} I
\end{equation*}
so that the ideals $I_1,I_2,\ldots,I_n$ satisfy \eqref{ProgressDivisibility} in Lemma \ref{DedekindDomainAlgorithm}.

The reader has perhaps noticed that the first conclusion of Lemma \ref{DedekindDomainAlgorithm} does not require that $R$ be a Dedekind domain.  Indeed,
one can obtain that $I = I_1I_2\cdots I_N$ by assuming only that $R$ is a commutative ring with unity.  On the other hand, we are required to assume that $I_1,I_2,\ldots,I_{n-1}$ be invertible
as fractional ideals of $R$ in order to deduce that $J_n\subseteq I_n$.
For instance, let $\sqrt[k]{2}$ denote the positive real $k$th root of $2$ and let $R = \intg[\sqrt 2, \sqrt[3]{2},\sqrt[4]{2},\ldots]$.  Now define $I = (\sqrt 2, \sqrt[3]{2},\sqrt[4]{2},\ldots)$
and let $J_1 = I_1 = I_2 = I$ and  $J_2 = R$.  Directly from these definitions, we obtain that  $J_1J_2 \subseteq I$ and $I_1 = J_1 + I$.
If $a\in I$ then write $a = \sum_{k=1}^M a_k \sqrt[k]{2}$ and observe that
\begin{equation*}
	a = \sum_{k=1}^M a_k \sqrt[2k]{2}\sqrt[2k]{2} \in I^2.
\end{equation*}
It now follows that $I^2 = I$ and we obtain
\begin{equation*}
	I_1I_2  = I^2 = I = I+I = I_1R + I = I_1J_2 + I.
\end{equation*}
As a result, the ideals $I,I_1,I_2,J_1$ and $J_2$ satisfy the assumptions of Lemma \ref{DedekindDomainAlgorithm} but do not satisfy the second conclusion that $J_2\not\subseteq I_2$.
Therefore, we do indeed require the assumption that $R$ is a Dedekind domain in order to obtain the full statement of Lemma \ref{DedekindDomainAlgorithm}.

\begin{proof}[Proof of Theorem \ref{GeneralReplacement}]
	Assume that $E$ is a Galois extension of $K$ containing $\alpha_1,\alpha_2,\ldots,\alpha_N$.
	Since we know that $\alpha = \alpha_1\alpha_2\cdots\alpha_N$ we get immediately
	\begin{equation*}
		\alpha^{[E:K]} = \prod_{n=1}^N \norm_{E/K}(\alpha_n) = \prod_{n=1}^N \norm_{K(\alpha_n)/K}(\alpha_n)^{[E:K(\alpha_n)]}
	\end{equation*}
	For simplicity, we shall set $\beta_n = \norm_{K(\alpha_n)/K}(\alpha_n)$ so that $\beta_n\in K$ and
	\begin{equation} \label{AlphaPoints}
		\alpha^{[E:K]}  =  \prod_{n=1}^N \beta_n^{[E:K(\alpha_n)]}
	\end{equation}
	We now define fractional ideals $A, B_1,B_2,\cdots, B_N$ of $R$ by $A = \alpha \intK$ and $B_n = \beta_n \intK$ for all $n$.  It is easily verified that $(a\intK)(b\intK) = (ab)\intK$ for 
	all $a,b\in K$, and therefore, we obtain that 
	\begin{equation*}
		A^{[E:K]} = \alpha^{[E:K]} \intK \quad\mbox{and}\quad \prod_{n=1}^N B_n^{[E:K(\alpha_n)]} = \left( \prod_{n=1}^N \beta_n^{[E:K(\alpha_n)]}\right) \intK.
	\end{equation*}
	Now using \eqref{AlphaPoints} we conclude that
	\begin{equation} \label{AlphaIdeals}
		A^{[E:K]} = \prod_{n=1}^N B_n^{[E:K(\alpha_n)]}.
	\end{equation}
	Now let $I$ and $I'$ be relatively prime integral ideals of $\intK$ such that $I/I' = A$.  Similarly, define $J_n$ and $J'_n$
	to be relatively prime ideals of $\intK$ such that $J_n/J'_n = B_n$.  In view of these definitions, \eqref{AlphaIdeals} yields
	\begin{align*}
		\frac{I^{[E:K]}}{(I')^{[E:K]}} & = \left( \frac{I}{I'}\right) ^{[E:K]}   = \prod_{n=1}^N\left( \frac{J_n}{J'_n} \right)^{[E:K(\alpha_n)]}
			 = \left(\prod_{n=1}^N J_n^{[E:K(\alpha_n)]}\right) \bigg/ \left(\prod_{n=1}^N (J'_n)^{[E:K(\alpha_n)]}\right)
	\end{align*}
	and Lemma \ref{FractionalIdeals}\eqref{RatioContain} gives
	\begin{equation*}
		\prod_{n=1}^N J_n^{[E:K(\alpha_n)]}  \subseteq I^{[E:K]}\quad\mbox{and}\quad \prod_{n=1}^N (J'_n)^{[E:K(\alpha_n)]}  \subseteq (I')^{[L:K]}.
	\end{equation*}
	Then it follows that
	\begin{equation*}
		\left(\prod_{n=1}^N J_n\right)^{[E:K]}  \subseteq I^{[E:K]}\quad\mbox{and}\quad \left(\prod_{n=1}^N J'_n\right)^{[E:K]}  \subseteq (I')^{[E:K]},
	\end{equation*}
	and using the fact the $\intK$ has unique factorization of ideals into prime ideals, we deduce that
	\begin{equation*}
		\prod_{n=1}^N J_n \subseteq I \quad\mbox{and}\quad \prod_{n=1}^N J'_n \subseteq I'.
	\end{equation*}
	Now applying Lemma \ref{DedekindDomainAlgorithm}, we obtain integral ideals $I_1,I_2,\ldots,I_N,I'_1,I'_2,\ldots,I'_N$ of $\intK$ satisfying the following properties:
	\begin{enumerate}[(a)]
		\item\label{IProducts} $I=I_1I_2\cdots I_N$ and $I' = I'_1I'_2\cdots I'_N$
		\item\label{IContainments} $J_n\subseteq I_n$ and $J'_n\subseteq I'_n$ for all $n$.
	\end{enumerate}
	This enables us to conclude from Lemma \ref{FractionalIdeals}\eqref{ProdExtend} that 
	\begin{equation*}
		I\intL = \prod_{n=1}^N (I_n\intL) \quad\mbox{and}\quad I'\intL = \prod_{n=1}^N (I'_n\intL).
	\end{equation*}
	
	We have assumed that $K$ is the Hilbert class field of $K$, so according to \cite{Furtwangler}, $I_n\intL$ and $I'_n\intL$ are principal ideals.  Hence, we may let $c_n$ and $c'_n$ be generators of
	$I_n\intL$ and $I'_n\intL$, respectively.  Again applying Lemma \ref{FractionalIdeals}, we conclude that
	\begin{equation*}
		\alpha\intL = A\intL = \frac{I\intL}{I'\intL} = \prod_{n=1}^N \frac{I_n\intL}{I'_n\intL} = \prod_{n=1}^N \frac{c_n\intL}{c'_n\intL} = \left(\prod_{n=1}^N \frac{c_n}{c'_n}\right)\intL,
	\end{equation*}
	so that $\alpha$ and $\prod_{n=1}^N c_n/c'_n$ are generators of the same fractional ideal of $\intL$.  Hence, there exists a unit $u\in \intL$ such that
	\begin{equation} \label{FirstProduct}
		\alpha = u \cdot  \prod_{n=1}^N \frac{c_n}{c'_n}.
	\end{equation}
	
	Additionally, we know that $J_n\intL$ and $J'_n\intL$ are principal and we shall let $d_n$ and $d'_n$ be their respective generators.  From this information, we deduce that 
	\begin{equation*}
		\left( \frac{d_n}{d'_n}\right) \intL = \frac{d_n\intL}{d'_n\intL} = \frac{J_n\intL}{J'_n\intL} = B_n\intL = (\beta_n\intK)\intL = \beta_n\intL
	\end{equation*}
	and there must exist a unit $y_n\in \intL$ such that 
	\begin{equation} \label{DBeta}
		y_nd_n/d'_n = \beta_n.  
	\end{equation}	
	Of course, $y_nd_n$ must also be a generator of $J_n\intL$.
	Using \eqref{IContainments}, we see that $J_n\intL \subseteq I_n\intL$ and $J'_n\intL \subseteq I'_n\intL$, and therefore, there must exist $r_n,r'_n\in \intL$ such that 
	\begin{equation} \label{CD}
		y_nd_n = c_nr_n\quad\mbox{and}\quad d'_n = c'_nr'_n.
	\end{equation}
	Since $L$ is assumed to be balanced, there exist units $u_n,u'_n\in \intL$ such that $|u_nr_n|_v,|u'_nr'_n|_v \geq 1$ for all $v\mid\infty$.
	Then applying \eqref{FirstProduct} we get that
	\begin{equation*}
		\alpha = u\cdot \frac{\prod_{n=1}^N u_n}{\prod_{n=1}^N u'_n} \cdot \prod_{n=1}^N \frac{c_n/u_n}{c'_n/u'_n}.
	\end{equation*}
	We now define 
	\begin{equation*}
		\gamma_0 = u\cdot \frac{\prod_{n=1}^N u_n}{\prod_{n=1}^N u'_n}\quad\mbox{and}\quad \gamma_n = \frac{c_n/u_n}{c'_n/u'_n}\quad\mbox{ for all } 1\leq n\leq N.
	\end{equation*}
	We obtain the conclusions \eqref{Product} and \eqref{Unit} immediately, so it remains to establish \eqref{Measures}.
	
	To see this, we have assumed that $J_n$ and $J'_n$ are relatively prime so that $J_n + J'_n = \intK$.  It now follows from Lemma \ref{FractionalIdeals} that
	\begin{equation} \label{RelativelyPrimes}
		J_n\intL + J'_n\intL = \intL\quad\mbox{and}\quad I_n\intL + I'_n\intL = \intL.
	\end{equation}
	Since $c_n$ and $c'_n$ are algebraic integers, we know that $\max\{|c_n|_v,|c'_n|_v\} \leq 1$ for all non-Archimedean places $v$ of $L$.
	If there exists a place $v\nmid\infty$ such that $\max\{|c_n|_v,|c'_n|_v\} < 1$ then $c_n$ and $c'_n$ would both belong to the maximal ideal
	$\mathcal M_v = \{x\in \intL: |x|_v < 1\}$.  In particular, we would have $I_n\intL + I'_n\intL  \subseteq \mathcal M_v$, contradicting the right hand equality of \eqref{RelativelyPrimes}.  
	As a result, we must have that $\max\{|c_n|_v,|c'_n|_v\} = 1$ for all non-Archimedean places $v$ of $L$ and we deduce that
	\begin{equation*}
		\max\left\{ \left|\frac{c_n}{u_n}\right|_v, \left|\frac{c'_n}{u'_n}\right|_v\right\}= 1\quad\mbox{ for all } v\nmid\infty.
	\end{equation*}
	By a similar argument, we obtain that 
	\begin{equation*}
		\max\{|y_nd_n|_v,|d'_n|_v\} = 1\quad\mbox{ for all } v\nmid\infty.
	\end{equation*}	
	Using these observations in conjunction with the product formula, for $1\leq n\leq N$ we have that
	\begin{equation*}
		\exp h(\gamma_n) = \prod_v \max\left\{ \left|\frac{c_n}{u_n}\right|_v, \left|\frac{c'_n}{u'_n}\right|_v\right\}
				=  \prod_{v\mid\infty}  \max\left\{ \left|\frac{c_n}{u_n}\right|_v, \left|\frac{c'_n}{u'_n}\right|_v\right\}
				\leq  \prod_{v\mid\infty} \max\left\{ \left|\frac{c_nu_nr_n}{u_n}\right|_v, \left|\frac{c'_nu'_nr_n}{u'_n}\right|_v\right\},
	\end{equation*}
	where the last inequality follows from the fact that $|u_nr_n|_v,|u'_nr'_n|_v \geq 1$ for all $v\mid\infty$.  From \eqref{CD} we get
	\begin{equation*}
		\exp h(\gamma_n) \leq \prod_{v\mid\infty} \max\{|c_nr_n|_v,|c'_nr'_n|_v\} = \prod_{v\mid\infty} \max\{|y_nd_n|_v,|d'_n|_v\} = \prod_v \max\{|y_nd_n|_v,|d'_n|_v\},
	\end{equation*}
	and the product formula along with \eqref{DBeta} yields
	\begin{equation*}
		\exp h(\gamma_n)\leq \prod_{v}\max\left\{ 1,\left| \frac{y_nd_n}{d'_n}\right|\right\} = \prod_{v} \max \left\{1,|\beta_n|_v\right\} = \exp h(\beta_n).
	\end{equation*}
	Finally, we see that
	\begin{equation*}
		h(\beta_n)  = h(\norm_{K(\alpha_n)/K}(\alpha_n)) \leq [K(\alpha_n):K]\cdot h(\alpha_n) = m_K(\alpha_n)
	\end{equation*}
	completing the proof.
\end{proof}

During the proof of Theorem \ref{GeneralReplacement}, we encountered a product of fractional ideals of the form $A = \prod_{n=1}^N I_n/I'_n$.  However, since the ideals on the right hand side
are not known to be principal, it is difficult to convert this information about ideals into information about elements.  Our remedy in the proof of Theorem \ref{GeneralReplacement} was to extend
each ideal to the Hilbert class field and use the fact that these extended ideals are principal.  An alternate approach is to raise both sides to a power equal to the class number of $K$.
Substituting this technique yields a variation on Theorem \ref{GeneralReplacement}.

\begin{thm} \label{PowerReplacement}
	Let $K$ be a balanced number field of class number $\lambda$.  Assume that $\alpha\in K^\times$ and $\alpha_1,\alpha_2,\ldots,\alpha_N\in\alg$ are such that 
	$\alpha = \alpha_1\alpha_2\cdots\alpha_N$.  Then there exist $\gamma_0,\gamma_1,\ldots,\gamma_N\in \alg$ satisfying the following four conditions:
	\begin{enumerate}[(i)]
		\item\label{Product2} $\alpha= \gamma_0\gamma_1\cdots\gamma_N$
		\item\label{Unit2} $\gamma_0$ is a unit in the ring of algebraic integers
		\item\label{Powers} $\gamma_n^\lambda \in K$ for all $0\leq n\leq N$
		\item\label{Measures2} $h(\gamma_n) \leq m_K(\alpha_n)$ for all $1\leq n\leq N$.
	\end{enumerate}
\end{thm}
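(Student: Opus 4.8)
The plan is to imitate the proof of Theorem~\ref{GeneralReplacement} essentially line for line, changing only the step where ideals are made principal: instead of extending each ideal to the Hilbert class field $L$, I would raise every ideal to the power $\lambda$ and use that $I^\lambda$ is a principal fractional ideal of $\intK$ for \emph{every} fractional ideal $I$ (the ideal class group has order $\lambda$, so each class has order dividing $\lambda$, and $[I^\lambda]=[I]^\lambda$ is trivial).

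First I would reproduce the opening of the proof of Theorem~\ref{GeneralReplacement} verbatim: pick a Galois extension $E/K$ containing $\alpha_1,\dots,\alpha_N$, set $\beta_n=\norm_{K(\alpha_n)/K}(\alpha_n)\in K^\times$ so that $\alpha^{[E:K]}=\prod_{n=1}^N\beta_n^{[E:K(\alpha_n)]}$, pass to the principal fractional ideals $A=\alpha\intK$ and $B_n=\beta_n\intK$, write $A=I/I'$ and $B_n=J_n/J_n'$ with $I,I'$ and $J_n,J_n'$ relatively prime integral ideals, deduce $\prod_n J_n\subseteq I$ and $\prod_n J_n'\subseteq I'$ as before, and apply Lemma~\ref{DedekindDomainAlgorithm} to obtain integral ideals $I_n,I_n'$ with $I=I_1\cdots I_N$, $I'=I_1'\cdots I_N'$, $J_n\subseteq I_n$, and $J_n'\subseteq I_n'$. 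As in that proof, $I_n\supseteq I$ and $I_n'\supseteq I'$ force $I_n+I_n'=\intK$, and likewise $J_n+J_n'=\intK$.

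Now I would swap the ``extend to $L$'' step for the ``raise to the $\lambda$'' step. Let $c_n,c_n'$ generate the principal ideals $I_n^\lambda,(I_n')^\lambda$ and let $d_n,d_n'$ generate $J_n^\lambda,(J_n')^\lambda$. Since $A^\lambda=\alpha^\lambda\intK$ is generated by $\prod_n(c_n/c_n')$ and $B_n^\lambda=\beta_n^\lambda\intK$ by $d_n/d_n'$, there are units $u,y_n\in\intK^\times$ with $\alpha^\lambda=u\prod_n(c_n/c_n')$ and $\beta_n^\lambda=y_nd_n/d_n'$. From $J_n^\lambda\subseteq I_n^\lambda$ and $(J_n')^\lambda\subseteq(I_n')^\lambda$ we get $y_nd_n=c_nr_n$ and $d_n'=c_n'r_n'$ with $r_n,r_n'\in\intK$, and since $K$ is balanced there are units $u_n,u_n'\in\intK^\times$ with $|u_nr_n|_v,|u_n'r_n'|_v\geq 1$ at every Archimedean place $v$. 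Inserting these units into the formula for $\alpha^\lambda$ gives $\alpha^\lambda=\delta_0\delta_1\cdots\delta_N$ with $\delta_0=u\prod_n u_n/\prod_n u_n'\in\intK^\times$ and $\delta_n=(c_n/u_n)/(c_n'/u_n')\in K^\times$. Finally I would take $\lambda$-th roots: choose any $\gamma_n\in\alg$ with $\gamma_n^\lambda=\delta_n$ for $1\leq n\leq N$, and set $\gamma_0=\alpha(\gamma_1\cdots\gamma_N)^{-1}$. Then \eqref{Product2} holds exactly; $\gamma_0^\lambda=\alpha^\lambda(\delta_1\cdots\delta_N)^{-1}=\delta_0$, so $\gamma_n^\lambda=\delta_n\in K$ for every $n$, which is \eqref{Powers}; and since $\delta_0,\delta_0^{-1}\in\intK$, both $\gamma_0$ and $\gamma_0^{-1}$ are roots of monic polynomials over $\intK$, so $\gamma_0$ is a unit in the ring of algebraic integers, which is \eqref{Unit2}. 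For \eqref{Measures2} I would rerun the height computation from the proof of Theorem~\ref{GeneralReplacement} with $\delta_n$ in place of $\gamma_n$ and $\beta_n^\lambda$ in place of $\beta_n$; the inputs needed are $c_nr_n=y_nd_n$, $c_n'r_n'=d_n'$, the inequalities $|u_nr_n|_v,|u_n'r_n'|_v\geq 1$, and $I_n+I_n'=\intK$, and the upshot is $h(\delta_n)\leq h(\beta_n^\lambda)=\lambda\,h(\beta_n)$, whence $h(\gamma_n)=\tfrac1\lambda h(\delta_n)\leq h(\beta_n)\leq[K(\alpha_n):K]\,h(\alpha_n)=m_K(\alpha_n)$.

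Since everything except the root-extraction is transcribed from the proof of Theorem~\ref{GeneralReplacement}, I expect no trouble there; the step I would treat most carefully is the $\lambda$-th-root bookkeeping. The point is that one must choose $\gamma_1,\dots,\gamma_N$ as arbitrary $\lambda$-th roots in $\alg$ and then \emph{define} $\gamma_0$ to be the quotient that makes the product equal $\alpha$ exactly---rather than only up to a $\lambda$-th root of unity---and only afterward verify that this choice forces $\gamma_0^\lambda=\delta_0\in K$ and that $\gamma_0$ is a unit. Note also that, in contrast with Theorem~\ref{GeneralReplacement}, the $\gamma_n$ need not lie in $K$; only their $\lambda$-th powers do, which is exactly the price of trading the Hilbert class field for the exponent of the class group.
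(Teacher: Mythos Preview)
Your proposal is correct and matches the paper's own proof essentially line for line: both transplant the argument of Theorem~\ref{GeneralReplacement}, replacing the extension to the Hilbert class field by raising each ideal to the $\lambda$th power so that $I_n^\lambda$, $(I_n')^\lambda$, $J_n^\lambda$, $(J_n')^\lambda$ are principal in $\intK$. The only cosmetic difference is in the $\lambda$th-root bookkeeping: the paper first chooses $\lambda$th roots $\delta_0,\dots,\delta_N$ of all factors, obtains $\alpha=\zeta\delta_0\delta_1\cdots\delta_N$ for some $\lambda$th root of unity $\zeta$, and sets $\gamma_0=\zeta\delta_0$, whereas you choose roots $\gamma_1,\dots,\gamma_N$ and then \emph{define} $\gamma_0=\alpha(\gamma_1\cdots\gamma_N)^{-1}$, which absorbs the root of unity automatically---both routes yield the same conclusions.
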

\begin{proof}
	We begin the proof in the exact same way that we began the proof of Theorem \ref{GeneralReplacement} so we need not repeat all of the steps here.  We define $\beta_n\in \intK$ in the same
	way and fractional ideals $$A,B_1,B_2,\ldots,B_N,I,I',J_1,J_2,\ldots,J_N,J'_1,J'_2,\ldots,J'_N$$ also in the same way.  Just as before, we obtain ideals $I_1,I_2,\ldots,I_N,I'_1,I'_2,\ldots,I'_N$ of $\intK$ 
	satisfying the following properties:
	\begin{enumerate}[(a)]
		\item\label{IProducts2} $I=I_1I_2\cdots I_N$ and $I' = I'_1I'_2\cdots I'_N$
		\item\label{IContainments2} $J_n\subseteq I_n$ and $J'_n\subseteq I'_n$ for all $n$.
	\end{enumerate}
	Now instead of extending these ideals to $\intL$, we observe that
	\begin{equation} \label{LambdaProduct}
		I^\lambda = I_1^\lambda I_2^\lambda \cdots I_N^\lambda\quad\mbox{and}\quad (I')^\lambda = (I'_1)^\lambda (I'_2)^\lambda \cdots (I'_N)^\lambda.
	\end{equation}
	Since $\lambda$ is the class number of $\intK$, all of the ideals appearing in \eqref{LambdaProduct} are principal, so we may let $c_n$ and $c'_n$ be generators of 
	$I_n^\lambda$ and $(I'_n)^\lambda$, respectively.  As a result, we get that
	\begin{equation*}
		\alpha^\lambda \intK = A^\lambda  = \frac{I^\lambda}{(I')^\lambda} = \prod_{n=1}^N \frac{I_n^\lambda}{(I'_n)^\lambda} = \left(\prod_{n=1}^N \frac{c_n}{c'_n}\right)\intK
	\end{equation*}
	and there exists a unit $u\in \intK$ such that
	\begin{equation} \label{FirstProduct2}
		\alpha^\lambda = u\cdot \prod_{n=1}^N\frac{c_n}{c'_n}.
	\end{equation}
	Additionally, we let $d_n$ and $d'_n$ be generators of $J^\lambda_n$ and $(J'_n)^\lambda$, respectively, and we deduce that $\left(d_n/d'_n\right) \intK = \beta_n^\lambda \intK$.
	Therefore, there is a unit $y_n\in \intK$ such that $y_nd_n/d'_n = \beta^\lambda$, and of course, $y_nd_n$ is a generator of $J_n^\lambda$.   By \eqref{IContainments2}, 
	we know that $J_n^\lambda\subseteq I_n^\lambda$ and $(J'_n)^\lambda\subseteq (I'_n)^\lambda$.  Consequently, there exist $r_n,r'_n\in \intK$ such that
	$y_nd_n = c_nr_n$ and $d'_n = c'_nr'_n$.  Since $K$ is balanced, there exist units $u_n,u'_n\in \intK$ such that $|u_nr_n|_v,|u'_nr'_n|_v\geq 1$ for all $v\mid \infty$.
	Now applying \eqref{FirstProduct2}, we get that
	\begin{equation*}
		\alpha^\lambda = u\cdot \frac{\prod_{n=1}^N u_n}{\prod_{n=1}^N u'_n} \cdot \prod_{n=1}^N \frac{c_n/u_n}{c'_n/u'_n}.
	\end{equation*}
	Now select $\delta_0,\delta_1,\ldots,\delta_N\in \alg$ such that
	\begin{equation*}
		\delta^\lambda_0 = u\cdot \frac{\prod_{n=1}^N u_n}{\prod_{n=1}^N u'_n}\quad\mbox{and}\quad \delta^\lambda_n =\frac{c_n/u_n}{c'_n/u'_n}\quad\mbox{ for all } 1\leq n\leq N,
	\end{equation*}
	so we get that $\alpha^\lambda = \delta_0^\lambda\delta_1^\lambda \cdots \delta_N^\lambda$.  As a result, there exists a $\lambda$th root of unity $\zeta$ such that 
	$\alpha = \zeta \delta_0\delta_1\cdots \delta_N$.  We set
	\begin{equation*}
		\gamma_0 = \zeta\delta_0\quad\mbox{and}\quad \gamma_n = \delta_n \quad\mbox{ for all } 1\leq n\leq N,
	\end{equation*}
	and we immediately obtain properties \eqref{Product2}, \eqref{Unit2} and \eqref{Powers}.  To establish \eqref{Measures2} we observe that $d_n$ and $d'_n$ generate a relatively prime
	pair of ideals of $\intK$ just as they did in the proof of Theorem \ref{GeneralReplacement}.  Of course, $c_n$ and $c'_n$ also generate a relatively prime pair of ideals, so we find that
	\begin{align*}
		h(\gamma_n)  & = \frac{1}{\lambda} h(\gamma_n^\lambda) \\
			& = \frac{1}{\lambda} \sum_{v} \log\max\left\{ \left|\frac{c_n}{u_n}\right|_v, \left|\frac{c'_n}{u'_n}\right|_v\right\} \\
			& = \frac{1}{\lambda} \sum_{v\mid\infty} \log\max\left\{ \left|\frac{c_n}{u_n}\right|_v, \left|\frac{c'_n}{u'_n}\right|_v\right\} \ \\
			& \leq \frac{1}{\lambda}\log \max\{|c_nr_n|_v,|c'_nr'_n|_v\} \\
			&  = \frac{1}{\lambda}\log \max\{|y_nd_n|_v,|d'_n|_v\} \\
			& = \frac{1}{\lambda} h\left(\frac{y_nd_n}{d'_n}\right) \\
			& = h(\beta_n)
	\end{align*}
	We still have that $h(\beta_n) \leq m_K(\alpha_n)$ so the result follows.
\end{proof}

The advantage of Theorem \ref{PowerReplacement} over Theorem \ref{GeneralReplacement} is that its hypotheses only require that $K$ be balanced.  On the other hand, we have little control over the
elements $\gamma_n$.  Indeed, they could generate an extension of $K$ of degree larger than $\lambda$.  In any case, Theorems \ref{GeneralReplacement} and \ref{PowerReplacement} are equivalent 
when $\lambda = 1$.  In particular, Corollary \ref{ClassNumber1} is also a consequence of Theorem \ref{PowerReplacement}
and we could have constructed the examples of Section \ref{Examples} equally well using Theorem \ref{PowerReplacement} instead of Theorem \ref{GeneralReplacement}.

\section{Proofs Related to our Examples}

We conclude this article by giving the proofs of the results needed to provide the examples in Section \ref{Examples}. 

\begin{proof}[Proof of Lemma \ref{Rank1}]
	Suppose that $w_1$ and $w_2$ are the Archimedean places of $K$.  Since $\xi$ is not a root of unity we know that $h(\xi) > 0$ and we may assume without loss of generality
	that $|\xi|_{w_1} > 1$.  From the product formula, we then get $|\xi|_{w_2} < 1$ and $h(\xi) = \log |\xi|_{w_1}$.
	
	Let $\ell$ be the smallest integer such that $|\xi^\ell x|_{w_1} \geq 1$ so that $|\xi^{\ell - 1} x|_{w_1} < 1$.  As a result, we have
	\begin{equation*}
		1 < \left| \frac{1}{\xi^{\ell - 1}}\cdot \frac{1}{x} \right|_{w_1} = \left| \frac{\xi}{\xi^{\ell }}\cdot \frac{1}{x} \right|_{w_1}\cdot \left| \frac{x}{x}\right|_{w_2}
			 = \frac{|x|_{w_2}}{|\xi^\ell |_{w_1}}\cdot \frac{|\xi|_{w_1}}{|x|_{w_1}|x|_{w_2}}
	\end{equation*}
	But we also know that 
	\begin{equation*}
		|x|_{w_1}|x|_{w_2} = \prod_{v\mid\infty} |x|_v = |\norm_{K/\rat}(x)|_\infty^{1/[K:\rat]},
	\end{equation*}
	and we get that
	\begin{equation*}
		1 < \frac{|x|_{w_2}}{|\xi^\ell |_{w_1}}\cdot \frac{\exp h(\xi)}{|\norm_{K/\rat}(x)|_\infty^{1/[K:\rat]}} \leq \frac{|x|_{w_2}}{|\xi^\ell |_{w_1}} =
			\frac{|\xi^\ell x|_{w_2}}{|\xi^\ell |_{w_1}\cdot |\xi^\ell |_{w_2}} = |\xi^\ell x|_{w_2},
	\end{equation*}
	where the last equality follows from the fact that $\xi^{\ell}$ is a unit.  As a result, we have that $|\xi^\ell x|_v \geq 1$ for all $v\mid\infty$.

	For the second statement of the lemma, assume that $x\in \intK$.  If $x$ is a unit then we may use $u = x^{-1}$ to satisfy the definition of balanced.  Otherwise,
	we have
	\begin{equation*}
		\log |\norm_{K/\rat}(x)|_\infty \geq \log \min\{|\norm_{K/\rat}(y)|_\infty: y\in \intK\setminus\intK^\times\} \geq [K:\rat]\cdot h(\xi)
	\end{equation*}
	and the result follows from the first statement of the lemma.
\end{proof}

\begin{proof}[Proof of Corollary \ref{KInfimumWeak}]
	Since $\rank(K) = 1$, we may let $\varepsilon$ be a fundamental unit of $K$.  Among all units in $K$ that are not roots of unity, $\varepsilon$ certainly has the smallest
	Weil height, and therefore, we conclude that
	\begin{equation} \label{MiniHeight}
		1 < [K:\rat]\cdot h(\varepsilon) \leq \log\min\{|\norm_{K/\rat}(y)|_\infty: y\in \intK\setminus\intK^\times\}.
	\end{equation}
	Now assume that $\alpha_1,\alpha_2,\ldots,\alpha_N\in \alg$ such that $\alpha= \alpha_1\alpha_2\cdots\alpha_N$.
	By combining Lemma \ref{Rank1} and Theorem \ref{GeneralReplacement}, there must exist $\gamma_0,\gamma_1,\ldots,\gamma_N\in K$ such that
	\begin{enumerate}[(i)]
		\item $\alpha= \gamma_0\gamma_1\cdots\gamma_N$
		\item $\gamma_0$ is a unit in $\intL$
		\item $m_K(\gamma_n) \leq m_K(\alpha_n)$ for all $1\leq n\leq N$.
	\end{enumerate}
	Since $\alpha$ is not a unit, we shall assume without loss of generality that $\gamma_1$ is not a unit.  Since $\intK$ is a unique factorization domain and $K$ is its field of fractions, 
	we may choose $a,b\in \intK$ with $\gcd(a,b) = 1$ such that $\gamma_1 = a/b$.  In view of these assumptions, we get that $\max\{|a|_v,|b|_v\} = 1$ for all $v\nmid\infty$.
	Now using \eqref{MiniHeight} we obtain
	\begin{align*}
		h(\varepsilon) & \leq \log\max\{|\norm_{K/\rat}(a)|^{1/[K:\rat]}_\infty,|\norm_{K/\rat}(b)|^{1/[K:\rat]}_\infty\} \\
			& = \log \max\left\{ \prod_{v\mid\infty} |a|_v,\prod_{v\mid\infty} |a|_v\right\} \\
			& \leq \log\prod_{v\mid\infty}\{|a|_v,|b|_v\} \\
			& = \log\prod_{v}\{|a|_v,|b|_v\}.
	\end{align*}
	Now we apply the product formula to obtain $h(\varepsilon) \leq h(\gamma_1)$.  Since $\varepsilon,\gamma_1\in K$, this means that
	\begin{equation} \label{EpsilonBounds}
		m_K(\varepsilon) \leq m_K(\gamma_1)\quad\mbox{and}\quad m_K(\varepsilon^{-1}) \leq m_K(\gamma_1)
	\end{equation}
	and we have established the following inequalities:
	\begin{enumerate}[(i)]
		\item $\max\{m_K(\zeta),m_K(\gamma_1),\ldots,m_K(\gamma_N)\} \leq \max\{ m_K(\alpha_1),\ldots,m_K(\alpha_N)\}$
		\item $\max\{m_K(\zeta),m_K(\varepsilon),m_K(\gamma_1),\ldots,m_K(\gamma_N)\} \leq \max\{ m_K(\alpha_1),\ldots,m_K(\alpha_N)\}$
		\item $\max\{m_K(\zeta),m_K(\varepsilon^{-1}),m_K(\gamma_1),\ldots,m_K(\gamma_N)\} \leq \max\{ m_K(\alpha_1),\ldots,m_K(\alpha_N)\}$.
	\end{enumerate}
	Since $\varepsilon$ is a fundamental unit, we let $\ell\in \intg$ and $\zeta\in K^\times_\tors$ be such that $\gamma_0 = \zeta\varepsilon^\ell$, which yields
	\begin{equation*}
		\alpha = \zeta\varepsilon^\ell\gamma_1\gamma_2\cdots\gamma_N.
	\end{equation*}
	We have now found that
	\begin{equation*}
		m_{K,\infty}(\alpha) =  \inf\left\{\max_{1\leq n\leq N} \{m_K(\alpha_n)\}:N\in \nat,\ \alpha_n\in K,\ \alpha = \prod_{n=1}^N\alpha_n\right\}
	\end{equation*}
	and the result follows from Northcott's Theorem \cite{Northcott}.
\end{proof}

\end{document}